\newtheorem{prop}{Proposition}[section]
\newtheorem{remark}[prop]{Remark}
\newtheorem{theorem}[prop]{Theorem}
\newtheorem{definition}[prop]{Definition}
\numberwithin{equation}{section}
\def\E{\mathbb{E}}
\def\P{\mathbb{P}}
\def\real{\mathbb{R}}
\def\1{\textbf{1}}
\def\F{\mathbb{F}}
\def \D{\mathbb{D}}
\def \E{\mathbb{E}}
\def \F{\mathbb{F}}
\def \P{\mathbb{P}}
\def \R{\mathbb{R}}
\def\Kc{{\cal K}}
\def\no{\noindent}
\def\={\;=\;}
\def\.{\;.}
\def \i{1\!\mbox{\rm I}}
\def\1{{\bf 1}}
 \def\normeL2#1{\left\|{#1}\right\|_{L^2}}
\def\E{\mathbb{E}}
\def\P{\mathbb{P}}
\def\h{\mathfrak{H}}
\def\cal#1{\mathcal{#1}}
\def\real{\mathbb{R}}
\def\D{\mathbb{D}}
\author{Peter Imkeller\footnote{Institut f\"ur Mathematik, Humboldt-Universit\"at zu Berlin, Unter den Linden 6, 10099 Berlin, Germany, \texttt{imkeller@math.hu-berlin.de}} \and Thibaut Mastrolia\footnote{Universit\'e Paris-Dauphine, CEREMADE UMR CNRS 7534, Place du mar\'echal De Lattre De Tassigny 75775 Paris cedex 16, \texttt{mastrolia@ceremade.dauphine.fr}} \and Dylan Possama\"i\footnote{Universit\'e Paris-Dauphine, CEREMADE UMR CNRS 7534, Place du mar\'echal De Lattre De Tassigny 75775 Paris cedex 16, \texttt{possamai@ceremade.dauphine.fr} }\and Anthony R\'eveillac\footnote{INSA de Toulouse, IMT UMR CNRS 5219, Universit\'e de Toulouse, 135 avenue de Rangueil, 31077 Toulouse Cedex 4, France, \texttt{anthony.reveillac@insa-toulouse.fr}}}
\title{A note on the Malliavin-Sobolev spaces}
\begin{document}

\maketitle

\abstract{\noindent In this paper, we provide a strong formulation of the stochastic G\^ateaux differentiability in order to study the sharpness of a new characterization, introduced in \cite{MPR}, of the Malliavin-Sobolev spaces. We also give a new internal structure of these spaces in the sense of sets inclusion.}

\vspace{1em} 
{\noindent \textbf{Key words: Malliavin calculus, Wiener space, functional analysis, stochastic G\^ateaux differentiability}.
}

\vspace{1em}
\noindent
{\noindent \textbf{AMS 2010 subject classification:} Primary: 60H07; Secondary: 46B09.
\normalsize
}

\section{Introduction}

Consider the classical $W^{1,2}(\real^d)$ Sobolev space on $\real^d$ ($d\geq 1$) consisting of mappings $f$ which are square integrable and which admit a square integrable weak derivative. This space is known to be the completion of continuously differentiable functions with respect to the Sobolev seminorm. As proved by Nikodym, $f$ is an element of $W^{1,2}(\real^d)$ if and only if there exist versions of $f$ along particular directions (see \cite[(C) page 717]{sugita} for a precise statement) whose Radon-Nikodym derivatives are square integrable. In an infinite-dimensional framework, similar characterizations can also be obtained. For instance, consider the classical Wiener space $(\Omega,\mathcal F$) defined as the space of continuous functions $\Omega$, on $[0,T]$ (with $T$ a fixed positive real number) vanishing at $0$, and where $\mathcal F$ is the $\sigma$-algebra of Borel sets on $\Omega$, for the topology induced by the uniform norm. Let $\P_0$ be the Wiener measure on $\Omega$, that is the unique measure on $(\Omega,\mathcal F)$ which makes the coordinate process $W$ a standard Brownian motion. Let in addition $H$ be the Cameron-Martin space, that is the space of absolutely continuous functions on $[0,T]$ with square integrable (with respect to Lebesgue's measure on $[0,T]$) Radon-Nikodym derivative. In that context, one can define the two following spaces: 
\begin{itemize}
\item[(S)] the closure of so-called cylindrical random variables on $\Omega$ with respect to the Sobolev seminorm $\|\cdot\|_{1,2}$ (where the weak derivative is the Malliavin derivative).
\item[(K-S)] the set of random variables $Z$ on $\Omega$ which are \textit{ray absolutely continuous} (RAC) and which satisfies the \textit{stochastic G\^ateaux differentiability} property (SGD) \textit{i.e.} which admit a G\^ateaux derivative in any direction of the Cameron-Martin space $H$.  
\end{itemize}
More precisely, a random variable $Z$ on $(\Omega,\mathcal F,\mathbb P_0)$ satisfies Property (SGD), if for any element $h$ in $H$, $${\lim_{\varepsilon \to 0} \frac{Z\circ \tau_{\varepsilon h}-Z}{\varepsilon} \overset{\P_0}{=} \langle \widetilde{\mathcal{D} Z} ,h\rangle_H},$$ 
 where the limit is taken in probability, $\widetilde{\mathcal{D} Z}$ denotes the G\^ateaux derivative of $Z$ and where $\tau_{\varepsilon h}$ denotes the shift operator on the Wiener space.
The space (S) has been introduced by Shigekawa in \cite{shigekawa}, as a natural candidate to be a Sobolev space whose derivative is given by the Malliavin derivative (whose definition will be recalled in the next section), whereas the space (K-S) has been introduced by Kusuoka and Stroock in \cite{KusuokaStroock}. In comparison to the finite-dimensional case, Sugita proved in \cite{sugita} that these two spaces coincide. We will refer to them later on as $\D^{1,2}$. To be more precise, Characterization (K-S) (essentially) means that
$$ Z \in \D^{1,2} \ \Longleftrightarrow \ Z \textrm{ satisfies (RAC)} \textrm{ and (SGD)}.$$ 

\noindent
In practice Property (RAC) may not be easy to handle. Recently, it was shown in \cite[Theorem 4.1]{MPR} that the characterization (RAC) + (SGD) can actually be replaced, equivalently, by a stronger version of (SGD) that we name \textit{strong stochastically G\^ateaux differentiability}, or (SSGD) for short, which can be roughly stated as (a precise statement will be given in the next section):
for any element $h$ in $H$
$${\lim_{\varepsilon \to 0} \frac{Z\circ \tau_{\varepsilon h}-Z}{\varepsilon} \overset{L^q}{=} \langle \widetilde{\mathcal{D} Z},h\rangle_H},$$

where the limit is taken in $L^q$ spaces. According to \cite{MPR}, we obtain
$$ Z \in \D^{1,2}\ \Longleftrightarrow\ \exists q \in (1,2) \textrm{ s.t. } Z \textrm{ satisfies (SSGD$_2$(q))},$$
where a random variable $Z$ on $(\Omega,\mathcal F,\mathbb P_0)$ is said to satisfy (SSGD$_2$(q)) if ($Z$ and $\nabla Z$ are square integrable and) for any element $h$ in $H$
$${\lim_{\varepsilon \to 0} \frac{Z\circ \tau_{\varepsilon h}-Z}{\varepsilon} \overset{L^q}{=} \langle \nabla Z,h\rangle_H},$$ where $\nabla Z$ denotes the Malliavin derivative of $Z$.
In other words, the (RAC) property from the characterization of $\mathbb D^{1,2}$ can be dropped by strengthening property (SGD). One can also prove the following (see \cite[Lemma 4.2]{MPR}):   
\begin{equation} \label{charac:forall}Z \in \D^{1,2}\ \Longleftrightarrow \ \forall q \in (1,2), \; Z \textrm{ satisfies (SSGD$_2$(q))}.\end{equation}
In particular, one can wonder if
\begin{equation} \label{eq:pb} Z \in \D^{1,2}\ \Longleftrightarrow\  Z \textrm{ satisfies (SSGD$_2$(2))}?\end{equation}
In view of \eqref{charac:forall}, it is clear that $Z$ satisfies SSGD$_2$(2) implies that $Z$ is in $\D^{1,2}$. The question is then is it also a necessary condition? The aim of this article is to answer this question by the negative. Indeed we provide in Theorem \ref{th:main} a random variable $Z$ such that
$$ Z \in \D^{1,2} \text{ \underline{and} } Z \textrm{ does not satisfies (SSGD$_2$(2))}.$$
Incidentally, this question is related to proving that 
$$\mathbb{D}^{1,2+}:=\bigcup_{\varepsilon >0} \mathbb{D}^{1,2+\varepsilon} \subsetneq \mathbb{D}^{1,2},$$
which we address at the end of the paper (as Theorem \ref{thm:inclusion}).

\vspace{0.5em}
\noindent The rest of the paper is organized as follows. In Section \ref{preliminaries} we fix all the notations, give a rigorous definition of the strong stochastic G\^ateaux differentiability, and we highlight the wellposedness of this definition by giving some classical properties concerning the space of random variables which satisfy this (strong) differentiability property. Then, in Section \ref{mainresults} we answer the question given in \eqref{eq:pb} and we study more precisely the structure of the Malliavin-Sobolev spaces.
\section{Preliminaries}\label{preliminaries}

\subsection{Notations}

We fix throughout the paper a time horizon $T>0$. Let $\Omega:=C_0([0,T],\mathbb R)$ be the canonical Wiener space of continuous function $\omega$ from $[0,T]$ to $\mathbb R$ such that $\omega(0)=0$. $\mathcal F$ denotes the Borel $\sigma$-algebra on $\Omega$, for the uniform topology. Let $W:=(W_t)_{t\in [0,T]}$ be the canonical Wiener process, that is, for any $t$ in $[0,T]$, $W_t$ denotes the evaluation mapping: $W_t(\omega):=\omega_t$ for any element $\omega$ in $\Omega$. We set $\F^o:=(\mathcal F^o_t)_{t\in[0,T]}$ the natural filtration of $W$, and remind the reader that $\mathcal F=\mathcal F^o_T$. Under the Wiener measure $\P_0$, the process $W$ is a standard Brownian motion and we denote by $\F:=(\mathcal F_t)_{t\in[0,T]}$ the usual augmentation (which is right-continuous and complete) of $\F^o$ under $\P_0$. Unless otherwise stated, all the expectations considered in this paper will have to be understood as expectations under $\P_0$, and all notions of measurability for elements of $\Omega$ will be with respect to the filtration $\F$ or the $\sigma$-field $\mathcal F_T$. Unless otherwise stated, topological spaces are endowed with their Borel $\sigma$-algebra.

\vspace{0.5em}
\noindent We set
$$\h:=\left\{f:[0,T]\longrightarrow\mathcal \real, \text{ Borel-measurable, s.t. }\int_0^T |f(s)|^2 ds <+\infty \right\},$$
and we denote by $H$ the Cameron-Martin space defined as: 
$$H:=\left\{ h:[0,T] \longrightarrow \real, \; \exists \dot{h}\in\h, \; h(t)=\int_0^t \dot{h}(x)dx, \; \forall t\in [0,T]\right\},$$
which is an Hilbert space equipped with the inner product $\langle h_1,h_2 \rangle_{H}:=\int_0^T \dot{h_1}(t) \dot{h_2}(t) dt$, for any $(h_1, h_2)\in H\times H$, and with associated norm $\|h\|_H^2:=\langle h,h \rangle_{H}$.
For any $h$ in $H$, we will always denote by $\dot{h}$ a version of its Radon-Nykodym density with respect to the Lebesgue measure. 
Define next for any Hilbert space $\mathcal K$ and for any $p\geq 1$, $L^p(\Kc)$ as the set of all $\mathcal F_T$-measurable random variables $Z$ which are valued in an Hilbert space $\mathcal{K}$, and such that $\|Z\|_{L^p(\Kc)}^p<+\infty$, where
$$\|Z\|_{L^p(\Kc)}:=\left(\E\left[\|Z\|_{\mathcal{K}}^p\right]\right)^{1/p}.$$

\vspace{0.5em}
\no Let $\mathcal{S}$ be the set of polynomial cylindrical functionals, that is the set of random variables $Z$ of the form
\begin{equation}
\label{eq:cylindrical}
Z=f(W(h_1),\ldots,W(h_n)), \quad (h_1,\ldots,h_n) \in H^n, \; f \in \R^n[X], \text{ for some }n\geq 1,
\end{equation}
where $W(h):=\int_0^T \dot{h}_s dW_s$ for any $h$ in $H$. For any $Z$ in $\mathcal{S}$ of the form \eqref{eq:cylindrical}, the Malliavin derivative $\nabla Z$ of $Z$ is defined as the following $H$-valued random variable:
\begin{equation}
\label{eq:DF}
\nabla Z:=\sum_{i=1}^n f_{x_i}(W(h_1),\ldots,W(h_n)) h_i(\cdot),
\end{equation}
where $f_{x_i}:=\frac{df}{dx_i}$. Denote then by $\mathbb{D}^{1,p}$ the closure of $\mathcal{S}$ with respect to the Malliavin-Sobolev semi-norm $\|\cdot\|_{1,p}$, defined as:
$$ \|Z\|_{1,p}:=\left(\E\left[|Z|^p\right] + \E\left[\|\nabla Z\|_{H}^p\right]\right)^{1/p}. $$
One of the main tools that we will use throughout this paper is the shift operator along directions in the Cameron-Martin space. More precisely, for any $h\in H$, we define the following shift operator $\tau_{h}:\Omega\longrightarrow\Omega$ by
$$\tau_{h}(\omega):=\omega + h.$$
Note that the fact that $h$ belongs to $H$ ensures that $\tau_h$ is a measurable shift on the Wiener space. One of the main techniques when working with shifts on the path space is the famous Cameron-Martin formula.
\begin{prop}\label{prop.cam}
$($Cameron-Martin Formula, see e.g. \cite[Appendix B.1]{ustunelzakai}$)$ Let $Z$ be a $\mathcal F_T$-measurable random variable and let $h$ be in $H$. Then, when both sides are well-defined
$$ \E[Z\circ \tau_{h}] =\E\left[ Z \exp\left(\int_0^T \dot{h}(s) dW_s -\frac12 \int_0^T |\dot{h}(s)|^2 ds\right)\right]. $$
\end{prop}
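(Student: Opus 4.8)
The plan is to reinterpret the shift $\tau_h$ as a Girsanov change of measure and to exploit that a deterministic Cameron--Martin direction produces a genuine exponential density. Throughout, write
$$\mathcal{E}(h):=\exp\left(\int_0^T \dot{h}(s)\,dW_s-\frac12\int_0^T|\dot{h}(s)|^2\,ds\right),$$
so that the asserted identity reads $\E[Z\circ\tau_h]=\E[Z\,\mathcal{E}(h)]$. Since $\dot{h}\in\h$ is deterministic and square integrable, the Novikov condition $\E[\exp(\frac12\int_0^T|\dot{h}(s)|^2\,ds)]<+\infty$ holds trivially, whence the associated exponential local martingale is a true $\P_0$-martingale and $\mathcal{E}(h)$ is a probability density. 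I would therefore define the equivalent probability measure $\Q$ on $(\Omega,\mathcal F_T)$ by $d\Q/d\P_0:=\mathcal{E}(h)$.

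By Girsanov's theorem, under $\Q$ the process $B:=W-h$, i.e. $B_t=W_t-\int_0^t\dot{h}(s)\,ds$, is a standard Brownian motion. The key observation is then the elementary translation identity $W(g)\circ\tau_h=W(g)+\langle g,h\rangle_H$ for $g\in H$ (valid since $h$ is absolutely continuous, so that $\int_0^T\dot{g}(s)\,dh(s)=\langle g,h\rangle_H$ is a genuine pathwise integral), which says that $\tau_h$ acts on the coordinate process exactly as the addition of the deterministic path $h$; equivalently $B+h=W$. I would combine these as follows: for $Z$ bounded and $\mathcal F_T$-measurable, the map $\omega\mapsto Z(\omega+h)$ is again $\mathcal F_T$-measurable, and since the law of the $\Q$-Brownian motion $B$ coincides with the law of $W$ under $\P_0$ (both equal the Wiener measure),
$$\E_{\P_0}\!\left[Z\circ\tau_h\right]=\E_{\P_0}\!\left[Z(W+h)\right]=\E_{\Q}\!\left[Z(B+h)\right]=\E_{\Q}\!\left[Z(W)\right]=\E_{\P_0}\!\left[Z\,\mathcal{E}(h)\right],$$
where the last equality is the definition of $\Q$. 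This proves the formula for bounded $Z$.

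The main obstacle is the passage from bounded $Z$ to the general ``both sides well-defined'' statement, together with the law-identification step. For the former I would argue by truncation: apply the bounded case to $Z_n:=(Z\wedge n)\vee(-n)$ and let $n\to+\infty$, invoking monotone/dominated convergence on each side under the stated integrability (the density $\mathcal{E}(h)$ being fixed and integrable). The delicate point is the third equality above, namely that pre-composition with $\tau_h$ commutes with transporting the expectation from $\P_0$ to $\Q$; this is where one must use that $\tau_h$ is a measurable map of $\Omega$ and that the finite-dimensional distributions of $B$ under $\Q$ match those of $W$ under $\P_0$, so that $\E_\Q[G(B)]=\E_{\P_0}[G(W)]$ for every measurable $G$, applied to $G:=Z(\cdot+h)$.

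As an alternative, purely computational route I would first verify the identity on the exponential functionals $Z=\exp(W(g))$, $g\in H$: using that $W(g)$ is centered Gaussian with variance $\|g\|_H^2$ together with the translation identity above, both sides equal $\exp(\frac12\|g\|_H^2+\langle g,h\rangle_H)$ by a direct Laplace-transform computation. Since such exponentials form a measure-determining family generating $\mathcal F_T$, a monotone-class argument extends the equality to all bounded measurable $Z$, and truncation then finishes as above.
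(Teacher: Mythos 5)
Your proof is correct. Note, however, that the paper does not prove this proposition at all: it is stated as a known result with a citation to \cite[Appendix B.1]{ustunelzakai}, so there is no in-paper argument to compare against. The Girsanov route you take is the standard derivation (and essentially the one in the cited reference): the Novikov condition is indeed trivial for a deterministic $\dot h\in\h$, your sign conventions are right (with $d\Q/d\P_0=\mathcal E(h)$, the process $B=W-h$ is a $\Q$-Brownian motion, so $B+h=W$), and the chain of equalities transporting the expectation through the law identification $\mathcal L_\Q(B)=\mathcal L_{\P_0}(W)$ is exactly the correct mechanism. Your handling of ``both sides well-defined'' can be streamlined: once the identity holds for bounded (or nonnegative, with values in $[0,+\infty]$ via monotone convergence) random variables, apply it to $|Z|$ to get $\E_{\P_0}[|Z\circ\tau_h|]=\E_{\P_0}[|Z|\,\mathcal E(h)]$, which shows one side is finite if and only if the other is; the signed case then follows by splitting $Z=Z^+-Z^-$, with no need for dominated convergence. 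One small technical point you gloss over: the paper's $\mathcal F_T$ is the $\P_0$-augmented $\sigma$-field, while the law-transport step $\E_\Q[G(B)]=\E_{\P_0}[G(W)]$ for every measurable $G$ is cleanest for Borel ($\mathcal F^o_T$-measurable) functionals, for which $\omega\mapsto Z(\omega+h)$ is measurable because $\tau_h$ is continuous; the extension to $\mathcal F_T$-measurable $Z$ then follows by choosing a Borel version, since $\Q\sim\P_0$ guarantees that null sets are respected on both sides. Your alternative route via exponentials $\exp(W(g))$ and a monotone-class argument is also valid (the family is multiplicative and generates $\mathcal F^o_T$), and both sides do equal $\exp(\frac12\|g\|_H^2+\langle g,h\rangle_H)$; it buys a purely computational verification at the cost of the same measure-theoretic bookkeeping at the end.
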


\noindent Fix $\varepsilon>0$ and $h\in H$, we defined \textit{the difference quotient of $Z$ along $h$} denoted by $X_\varepsilon$ as
$$ X_{\varepsilon}:= \frac{Z\circ \tau_{\varepsilon h} - Z}{\varepsilon}.$$  
Our analysis will be based on notions of G\^ateaux Differentiability on the Wiener space. Before introducing a new definition in this context we recall the classical formulation due to Kusuoka and Stroock in \cite{KusuokaStroock}.  
\begin{definition}[Stochastically G\^ateaux Differentiability, \cite{KusuokaStroock}]
Let $p>1$ and  $Z$ be in $L^p(\R)$. $Z$ is said to be Stochastically G\^ateaux Differentiable if there exists $\mathcal{D}Z \in L^p(H)$ such that for any $h$ in $H$
$$ \lim\limits_{\varepsilon\to 0}  \frac{Z\circ \tau_{\varepsilon h}-Z}{\varepsilon}=\langle \mathcal{D}Z, h\rangle_H, \; \text{in probability under } \mathbb{P}_0 . $$
We refer this property as $(SGD)$.
\end{definition}
\noindent We now introduce a stronger formulation of $($SGD$)$ that we name \textit{Strong Stochastically G\^ateaux Differentiability}.
\begin{definition}[Strong Stochastically G\^ateaux Differentiability]
Let $p>1$ and  $Z$ be in $L^p(\R)$. $Z$ is said to be Strongly Stochastically G\^ateaux Differentiable of order $(p,q)$ if there exist $q\in (1,p)$ and $\mathcal{D}Z \in L^p(H)$ such that for any $h$ in $H$
$$ \lim\limits_{\varepsilon\to 0}  \E\left[ \left|\frac{Z\circ \tau_{\varepsilon h}-Z}{\varepsilon}-\langle \mathcal{D}Z, h\rangle_H\right|^q\right]=0. $$
We denote this property by $(SSGD_p(q))$ and we define $\mathcal{G}_p(q)$ the space of random variables $Z$ which satisfy $(SSGD_p(q))$ with G\^ateaux derivative $\mathcal{D}Z$ in $L^p(H)$.
\end{definition}

\noindent We name this property \textit{Strong Stochastically G\^ateaux Differentiability} as it strengthens the \textit{Stochastic G\^ateaux Differentiability} property introduced in \cite{KusuokaStroock} where the $L^q$-convergence above is replaced with convergence in probability. 

\begin{remark}
\label{rk:forall}
According to \cite[Theorem 4.1 and Lemma 4.2]{MPR}, $Z$ enjoys Property $(SSGD_p(q))$ for an element $q$ in $(1,p)$ if and only if there exists $\mathcal{D}Z \in L^p(H)$ such that for \underline{any} $q'\in (1,p)$ and for any $h$ in $H$, it holds that
$$ \lim\limits_{\varepsilon\to 0}  \E\left[ \left|\frac{Z\circ \tau_{\varepsilon h}-Z}{\varepsilon}-\langle \mathcal{D}Z, h\rangle_H\right|^{q'}\right]=0. $$ 
As a consequence, if there exists $q$ in $(1,p)$ such that $Z$ satisfies Property $(SSGD_p(q))$, then it also satisfies Property $(SSGD_p(q'))$ for any $q'\in(1,p)$.
\end{remark}
\noindent In view of Remark \ref{rk:forall}, we from now write $(SSGD_p)$ for $(SSGD_p(q))$ with $q$ in $(1,p)$. Similarly, we write $\mathcal{G}_p$ for $\mathcal{G}_p(q)$.  We now recall the main result of \cite{MPR}.
\begin{theorem}[\cite{MPR}, Theorem 4.1]\label{thm:characterizationD12} Let $Z$ be a real random variable. Then 
$$ Z\in \mathbb{D}^{1,p}\; \Longleftrightarrow  Z \text{ satisfies } (SSGD_p).$$ 
In other words, $\mathbb{D}^{1,p}=\mathcal{G}_p$.
\end{theorem}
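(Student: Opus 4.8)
The plan is to prove the two inclusions $\mathbb{D}^{1,p}\subseteq\mathcal{G}_p$ and $\mathcal{G}_p\subseteq\mathbb{D}^{1,p}$ separately, in each case reducing to the cylindrical functionals $\mathcal{S}$ where everything is explicit, and using the Cameron--Martin formula (Proposition \ref{prop.cam}) as the bridge between shifts and expectations. Throughout, for $h\in H$ I write $\rho_{\varepsilon h}:=\exp(\varepsilon W(h)-\tfrac{\varepsilon^2}{2}\|h\|_H^2)$ for the associated density, so that $\E[Y\circ\tau_{\varepsilon h}]=\E[Y\rho_{\varepsilon h}]$.

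For $\mathbb{D}^{1,p}\subseteq\mathcal{G}_p$ I would first treat $Z\in\mathcal{S}$: since $W(h_i)\circ\tau_{\varepsilon h}=W(h_i)+\varepsilon\langle h_i,h\rangle_H$, the difference quotient $X_\varepsilon$ converges pointwise, hence (by polynomial growth and Gaussian integrability) in every $L^q$, to $\langle\nabla Z,h\rangle_H$, settling the claim with $\mathcal{D}Z=\nabla Z$. The real work is passing from $\mathcal{S}$ to its closure, and the key is a uniform estimate. Writing, for cylindrical $Z$, the fundamental theorem of calculus
\[
Z\circ\tau_{\varepsilon h}-Z=\int_0^\varepsilon\langle\nabla Z\circ\tau_{sh},h\rangle_H\,ds,
\]
Jensen's inequality followed by the Cameron--Martin formula and Hölder's inequality with conjugate exponents $p/q$ and $p/(p-q)$ yields
\[
\E\big[|X_\varepsilon|^q\big]\le\frac1\varepsilon\int_0^\varepsilon\E\big[|\langle\nabla Z,h\rangle_H|^q\rho_{sh}\big]\,ds\le C(p,q,\|h\|_H)\,\|\nabla Z\|_{L^p(H)}^q,
\]
uniformly for $\varepsilon\in(0,1]$. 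Here it is \emph{crucial} that $q<p$: this is exactly what makes $p/q>1$, so that the diverging density $\rho_{sh}$ is absorbed into a finite Gaussian moment; the choice $q=p$ would force an $L^\infty$ control of $\rho_{sh}$ which fails, and this foreshadows the negative answer to \eqref{eq:pb}. Since $Z\mapsto X_\varepsilon$ is linear, the bound extends to all of $\mathbb{D}^{1,p}$ by density, and a three-term splitting — approximating $Z$ by $Z_n\in\mathcal{S}$ in $\|\cdot\|_{1,p}$, controlling $X_\varepsilon(Z-Z_n)$ by the uniform bound, invoking the cylindrical convergence for fixed $n$, and using $\|\langle\nabla(Z_n-Z),h\rangle_H\|_{L^q}\le\|h\|_H\|\nabla(Z_n-Z)\|_{L^p(H)}$ — gives $X_\varepsilon\to\langle\nabla Z,h\rangle_H$ in $L^q$.

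For $\mathcal{G}_p\subseteq\mathbb{D}^{1,p}$, suppose $Z$ satisfies $(SSGD_p)$ with derivative $\mathcal{D}Z\in L^p(H)$. I would test the difference quotient against an arbitrary cylindrical $G\in\mathcal{S}$ (which lies in every $L^r$) and identify the limit in two ways. On one hand, the $L^q$-convergence gives $\E[X_\varepsilon G]\to\E[\langle\mathcal{D}Z,h\rangle_H\,G]$. On the other hand, applying Proposition \ref{prop.cam} to $F:=Z\,(G\circ\tau_{-\varepsilon h})$, for which $F\circ\tau_{\varepsilon h}=(Z\circ\tau_{\varepsilon h})G$, yields
\[
\E[X_\varepsilon G]=\frac1\varepsilon\,\E\big[Z\big((G\circ\tau_{-\varepsilon h})\rho_{\varepsilon h}-G\big)\big]\xrightarrow[\varepsilon\to0]{}\E\big[Z\,(G\,W(h)-\langle\nabla G,h\rangle_H)\big],
\]
the convergence holding because the bracketed term converges in $L^{p/(p-1)}$ while $Z\in L^p$. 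Equating the two limits produces the integration-by-parts identity $\E[\langle\mathcal{D}Z,h\rangle_H\,G]=\E[Z\,(G\,W(h)-\langle\nabla G,h\rangle_H)]$ for all $G\in\mathcal{S}$ and $h\in H$, which says precisely that $\mathcal{D}Z$ is the weak Malliavin derivative of $Z$ in duality with the divergence operator.

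The main obstacle is the last step: upgrading this weak identity, together with $Z\in L^p$ and $\mathcal{D}Z\in L^p(H)$, to genuine membership $Z\in\mathbb{D}^{1,p}$ with $\nabla Z=\mathcal{D}Z$. This is where functional analysis enters: one invokes the closability of $\nabla$ on $L^p$ and the fact that, for $1<p<\infty$, its $L^p$--$L^{p/(p-1)}$ adjoint is (the closure of) the divergence $G h\mapsto G\,W(h)-\langle\nabla G,h\rangle_H$; the identity above exhibits $Z$ in the domain of that adjoint, and reflexivity of $L^p$ identifies this domain with $\mathbb{D}^{1,p}$ while forcing $\nabla Z=\mathcal{D}Z$. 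Conceptually, the whole point is that the $L^q$-convergence in $(SSGD_p)$ — as opposed to the mere convergence in probability of $(SGD)$ — is exactly the strength needed to pass to the limit inside $\E[X_\varepsilon G]$, which is what allows one to dispense with the ray-absolute-continuity hypothesis of the classical characterization.
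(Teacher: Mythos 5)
First, a point of reference: the paper itself gives no proof of this theorem --- it is imported verbatim from \cite{MPR}, where (as the introduction here makes explicit: ``the (RAC) property \ldots can be dropped by strengthening property (SGD)'') the hard direction is proved by upgrading $(SSGD_p)$ to ray absolute continuity and then invoking the Kusuoka--Stroock/Sugita characterization of \cite{sugita}. Your route is genuinely different: a duality argument through the divergence operator. Your first inclusion $\mathbb{D}^{1,p}\subseteq\mathcal{G}_p$ is sound as written: the cylindrical case, the ray-wise fundamental theorem of calculus, and the Jensen/Cameron--Martin/H\"older chain giving $\E[|X_\varepsilon|^q]\le C(p,q,\|h\|_H)\|\nabla Z\|_{L^p(H)}^q$ are all correct, and your observation that $q<p$ is exactly what absorbs the Girsanov density $\rho_{sh}$ into a finite Gaussian moment is the right structural point (it is also why $\mathcal{G}_p(p)$ behaves differently, as Theorem \ref{th:main} shows). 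For the density step you should say explicitly that the continuity of $Y\mapsto X_\varepsilon(Y)$ from $L^p$ to $L^q$ for fixed $\varepsilon$ follows from the same Cameron--Martin/H\"older estimate applied to $Y=Z-Z_n$; with that, the three-term splitting closes. Likewise, your derivation of the integration-by-parts identity $\E[\langle\mathcal{D}Z,h\rangle_H\,G]=\E[Z(G\,W(h)-\langle\nabla G,h\rangle_H)]$ from $(SSGD_p)$, via $F:=Z\,(G\circ\tau_{-\varepsilon h})$, is correct, and this is indeed where the $L^q$-convergence (rather than convergence in probability) earns its keep.

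The genuine gap is the final step, and it is not a technicality. The identity above places $Z$ in the domain of the adjoint of the divergence \emph{restricted to elementary processes} $u=Gh$, $G\in\mathcal{S}$; call this restriction $\delta_0$. Abstract closability and reflexivity give you only the inclusion you do not need: from $\delta_0\subseteq(\overline{\nabla|_{\mathcal S}})^{*}$ one deduces $(\delta_0)^{*}\supseteq(\overline{\nabla|_{\mathcal S}})^{**}=\overline{\nabla|_{\mathcal S}}$, i.e. $\mathbb{D}^{1,p}\subseteq \mathrm{Dom}((\delta_0)^{*})$, which is the trivial direction. What you need is $\mathrm{Dom}((\delta_0)^{*})\subseteq\mathbb{D}^{1,p}$, and this is equivalent to the elementary processes being a core for the divergence in the $(L^p,L^{p/(p-1)})$ duality --- equivalently, to the coincidence of the ``weak-derivative'' Sobolev space $W^{1,p}$ (defined by integration by parts) with the completion space $\mathbb{D}^{1,p}$. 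For a closable operator $A$ and a restriction $B\subseteq A^{*}$, nothing in general forces $B^{*}=\overline{A}$, so ``closability plus reflexivity'' cannot deliver this. The coincidence $W^{1,p}=\mathbb{D}^{1,p}$ over Wiener space is a real theorem (for $p=2$ it can be checked by chaos expansion; for general $p\in(1,\infty)$ it rests on Ornstein--Uhlenbeck smoothing via Mehler's formula, $\nabla T_tZ=e^{-t}T_t\mathcal{D}Z$, finite-dimensional approximation, and ultimately Meyer-type estimates --- it is Sugita's \emph{other} 1985 paper, ``Sobolev spaces of Wiener functionals and Malliavin's calculus'', not the cited \cite{sugita}). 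If you either quote that result or carry out the $T_t$-mollification and let $t\downarrow0$ using closability of $\nabla$, your argument closes and yields a legitimate alternative to the (RAC)-based proof of \cite{MPR}; as it stands, the last paragraph asserts precisely the nontrivial half of that theorem without proof.
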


\vspace{0.5em}
\no At this stage, we would like point out that from the definition of Property $($SSGD$_p)$ we do not consider the case where the difference quotient associated to an element in $\mathcal{G}_p$ converges in $L^p$. This induces a stronger property than $($SSGD$_p)$ defined below.

\begin{definition}
Let $p>1$ and  $Z$ be in $L^p(\R)$. We say that $Z$ satisfies $($SSGD$_p(p))$ if there exists $\mathcal{D}Z \in L^p(H)$ such that for any $h$ in $H$
$$ \lim\limits_{\varepsilon\to 0}  \E\left[ \left|\frac{Z\circ \tau_{\varepsilon h}-Z}{\varepsilon}-\langle \mathcal{D}Z, h\rangle_H\right|^p\right]=0. $$
We define $\mathcal{G}_p(p)$ the space of random variables $Z$ which satisfy $(SSGD_p(p))$ with G\^ateaux derivative $\mathcal{D}Z$ in $L^p(H)$.
\end{definition}

\noindent Obviously, in view of Theorem \ref{thm:characterizationD12}, Property $($SSGD$_p(p))$ is sufficient for a given random variable to belong to $\D^{1,p}$. This fact was somehow known in the literature (\cite{bogachev} or \cite{janson}) although the characterization of Theorem \ref{thm:characterizationD12} was not. As a consequence, it is a natural question to wonder whether or not Condition (SSGD$_2$(2)) is also necessary for an element $Z$ to belong to $\D^{1,2}$. In other words

\vspace{0.5em}
\begin{center}
Do the spaces $\mathbb{D}^{1,2}$ and $\mathcal{G}_2(2)$ coincide ?
\end{center}
Addressing this question is the main goal of the next section. 
\section{Main results}\label{mainresults}
Throughout this section, we fix $T=1$ and we set $\phi(x):=\frac{e^{-x^2/2}}{\sqrt{2\pi}}$ for any $x\in \mathbb{R}$. Let $a$ be in $\mathbb{R}$ and consider the following random variable
\begin{equation}\label{CE}
 Z:=f(W_1),
 \end{equation}
with 
$$f(x):= e^{\frac{x^2}{4}} x^{-a} (2\pi)^{\frac14} \mathbf{1}_{x\geq \sqrt{2a}} +g(x)\mathbf{1}_{x<\sqrt{2a}},$$
where $g$ is a bounded continuously differentiable map with bounded derivative, such that $f$ is continuously differentiable on $\mathbb{R}$. Notice that $f$ is a nonnegative and nondecreasing mapping when $x\geq \sqrt{2a}$.

\begin{theorem}
\label{th:main}
Let $p>1$. There exists $Z$ in $\mathbb{D}^{1,p}$ such that $Z$ does not satisfy $(\textrm{SSGD}_p(p))$.
\end{theorem}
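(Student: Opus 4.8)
The plan is to take the random variable $Z=f(W_1)$ from \eqref{CE} (in the general-$p$ case one replaces the exponent $\tfrac14$ by $\tfrac{1}{2p}$ and the threshold $\sqrt{2a}$ by $\sqrt{ap}$, so that $f(x)\sim c\,e^{x^2/(2p)}x^{-a}$ on the right tail) and to exhibit the failure of $(\textrm{SSGD}_p(p))$ along the \emph{single} direction $h_0\in H$ given by $\dot{h}_0=\mathbf 1_{[0,1]}$, so that $h_0(1)=1$. For this $Z$ one has $\nabla Z=f'(W_1)\mathbf 1_{[0,1]}$, hence $\langle\nabla Z,h_0\rangle_H=f'(W_1)$, while the shift acts as $Z\circ\tau_{\varepsilon h_0}=f(W_1+\varepsilon)$; thus the difference quotient along $h_0$ is $X_\varepsilon=\varepsilon^{-1}\big(f(W_1+\varepsilon)-f(W_1)\big)$.

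First I would check that $Z\in\mathbb D^{1,p}$. Since $g$ and $g'$ are bounded, only the right tail matters, where $f(x)\sim c\,e^{x^2/(2p)}x^{-a}$ and $f'(x)\sim c'\,e^{x^2/(2p)}x^{1-a}$. Integrating the $p$-th powers against $\phi$ cancels the Gaussian exponential exactly and leaves the tail integrals $\int^{+\infty}x^{-pa}\,dx$ and $\int^{+\infty}x^{p-pa}\,dx$; both converge precisely when $a>1+\tfrac1p$ (giving $a>\tfrac32$ for $p=2$), so $\E[|f(W_1)|^p]<\infty$ and $\E[|f'(W_1)|^p]<\infty$. Membership in $\mathbb D^{1,p}$ then follows either from the classical one-dimensional Gaussian--Sobolev characterization, or, staying inside the framework of the excerpt, by verifying $(\textrm{SSGD}_p(q))$ for some $q\in(1,p)$ and invoking Theorem \ref{thm:characterizationD12}: a mean-value bound gives $|X_\varepsilon^{(h)}|\le |h(1)|\,\sup_{|s|\le\varepsilon|h(1)|}|f'(W_1+s)|$, and because the critical exponent $\tfrac{q-p}{2p}<0$ this supremum is integrable to the $q$-th power for small $\varepsilon$, providing a dominating function and hence $L^q$-convergence of $X_\varepsilon^{(h)}$ to $\langle\nabla Z,h\rangle_H$ for every $h$.

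Next I would show that $(\textrm{SSGD}_p(p))$ fails. If $Z$ satisfied it with derivative $\mathcal D Z\in L^p(H)$, then $L^p$-convergence would force convergence in $L^q$ for $q<p$, whose limit is unique and equal to $\langle\nabla Z,h\rangle_H$ by $(\textrm{SGD})$; so necessarily $\mathcal D Z=\nabla Z$ and in particular $\E[|X_\varepsilon-f'(W_1)|^p]\to 0$ along $h_0$. But by the change of variables $\E[|f(W_1+\varepsilon)|^p]=\int |f(y)|^p\phi(y-\varepsilon)\,dy$, the right-tail integrand behaves like $e^{\varepsilon y-\varepsilon^2/2}y^{-pa}$, which diverges at $+\infty$ for every $\varepsilon>0$; hence $f(W_1+\varepsilon)\notin L^p$. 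Since $\varepsilon^{-1}f(W_1)+f'(W_1)\in L^p$ and $X_\varepsilon-f'(W_1)=\varepsilon^{-1}f(W_1+\varepsilon)+(\text{an }L^p\text{ term})$, the left-hand side is not in $L^p$, i.e.\ $\E[|X_\varepsilon-f'(W_1)|^p]=+\infty$ for all $\varepsilon>0$. Therefore the limit is not $0$ and $Z\notin\mathcal G_p(p)$, which together with $Z\in\mathbb D^{1,p}$ proves the theorem.

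The heart of the argument, and the step I expect to be the most delicate, is the borderline calibration of $f$: it must grow at exactly the Gaussian-critical rate $e^{x^2/(2p)}$ so that a positive shift picks up the fatal factor $e^{\varepsilon W_1}$ that ejects $f(W_1+\varepsilon)$ from $L^p$, while the polynomial correction $x^{-a}$ with $a>1+\tfrac1p$ must be tuned to keep both $f$ and $f'$ inside $L^p(\phi\,dx)$ so that $Z$ genuinely belongs to $\mathbb D^{1,p}$. Making the two tail estimates hold simultaneously, and checking that the admissible window for $a$ is nonempty, is where the construction really lives; the gluing function $g$ and the threshold $\sqrt{ap}$ (the minimizer of $e^{x^2/(2p)}x^{-a}$, ensuring $f$ is nonnegative and nondecreasing on the tail) are only there to realize such an $f$ as a genuinely $C^1$ function on $\mathbb R$.
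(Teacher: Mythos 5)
Your proposal is correct and uses the same counterexample as the paper --- the random variable $Z=f(W_1)$ of \eqref{CE} with Gaussian-critical growth $e^{x^2/(2p)}x^{-a}$, threshold $\sqrt{ap}$ and $a>1+\tfrac1p$ (the paper only writes out $p=2$, where this reads $e^{x^2/4}x^{-a}$, $\sqrt{2a}$, $a>\tfrac32$; your general-$p$ calibration is the right one) --- but the execution of the failure step is genuinely different. The paper reduces matters to uniform integrability: since $X_\varepsilon\to\langle\nabla Z,h\rangle_H$ in probability by \cite[Theorem 3]{sugita}, membership in $\mathcal G_2(2)$ is equivalent to uniform integrability of $(|X_\varepsilon|^2)_\varepsilon$, and this is refuted by a lower bound on the increment: by monotonicity of $f$ on the tail, $f(x+\varepsilon)-f(x)\geq f(x+\varepsilon)-f(x+\varepsilon/2)$, the mean value theorem produces $f'(x+s_\varepsilon^x)$ with $s_\varepsilon^x\in(\varepsilon/2,\varepsilon)$, and the factor $e^{s_\varepsilon^x x}\geq e^{\varepsilon x/2}$ makes $\E[|X_\varepsilon|^2]=+\infty$. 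You instead pin down $\mathcal D Z=\nabla Z$ by uniqueness of the in-probability limit (for this $Z$ even pointwise, since $f\in C^1$), then show by the shifted-Gaussian computation that $f(W_1+\varepsilon)\notin L^p$ (tail integrand $\sim e^{\varepsilon y-\varepsilon^2/2}y^{-pa}$) and conclude by linearity of $L^p$, since $\varepsilon^{-1}f(W_1)+f'(W_1)\in L^p$; this avoids the monotonicity/mean-value manipulation entirely and is arguably more transparent about the mechanism (the shift picks up the fatal factor $e^{\varepsilon W_1}$), though the paper's UI framing has the side benefit of feeding directly into Remark 3.3, where the same estimates show $(|X_\varepsilon|^{q'})_\varepsilon$ \emph{is} uniformly bounded for $q'<2$. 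Your Step 1 is also slightly more careful than the paper's: the paper deduces $Z\in\mathbb D^{1,2}$ from the two moment bounds without comment, whereas your domination argument (the supremum $\sup_{|s|\leq\varepsilon|h(1)|}|f'(W_1+s)|$ is $q$-integrable for $q<p$ because the exponent $\tfrac{q-p}{2p}$ is negative, yielding $(\textrm{SSGD}_p(q))$ and hence membership via Theorem \ref{thm:characterizationD12}) makes that step self-contained within the paper's own framework. I see no gap in your argument.
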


\begin{proof} 
For the sake of simplicity we prove the result for $p=2$.

\vspace{0.5em} 
\noindent
Let $Z$ be the random variable defined by \eqref{CE} with $a>\frac32$. We prove that $Z\in \mathbb{D}^{1,2}$ \underline{and} that $Z$ does not satisfy (SSGD$_2$(2)). 
By definition of $f$,
$$ f'(x)=  \mathbf{1}_{x \geq \sqrt{2a}} e^{\frac{x^2}{4}} (-a x^{-a-1} +\frac12 x^{1-a}) (2\pi)^\frac14 +g'(x)\mathbf{1}_{x<\sqrt{2a}}. $$
The proof is divided in two steps. 

\paragraph*{Step 1. $\mathbf{Z\in \mathbb{D}^{1,2}}$.}
There exists $C>0$ such that
\begin{align*}
\mathbb{E}\left[ \left| Z\right|^2\right]& \leq 2\left( \int_{\sqrt{2a}}^{+\infty} x^{-2a} dx +\int_{-\infty}^{\sqrt{2a}} |g(x)|^2 \phi(x) dx \right) \\
& \leq C\left( \int_{\sqrt{2a}}^{+\infty} x^{-2a} dx+1\right)<+\infty, \; \textrm{since } a> \frac12.
\end{align*}
Besides, there exists $C'>0$ such that
\begin{align*}
\mathbb{E}\left[ \left| f'(W_1)\right|^2\right]& \leq 2\left(\int_{\sqrt{2a}}^{+\infty} (-a x^{-a-1} +\frac12 x^{1-a})^2 dx + \int_{-\infty}^{\sqrt{2a}} |g'(x)|^2 \phi(x) dx\right) \\
& = C'\left( \int_{\sqrt{2a}}^{+\infty} (x^{-2a-2} + x^{-2a+2} + x^{-2a})dx +1 \right)<+\infty, \; \textrm{since } a> \frac32.
\end{align*}
We hence deduce that $Z\in \mathbb{D}^{1,2}$. 

\paragraph*{Step 2. $\mathbf{Z}$ does not satisfy ($\mathbf{SSGD_2(2)}$).} Since $Z$ is in $\mathbb{D}^{1,2}$, we can compute its Malliavin derivative: 
$$\langle \nabla Z,h\rangle_H =\int_0^1 f'(W_1)\dot{h}_r dr=f'(W_1)h_1.$$
In addition, from \cite[Theorem 3]{sugita}, for any $h\in H$, it holds that
$$X_\varepsilon:= \frac{Z\circ \tau_{\varepsilon h} - Z}{\varepsilon} \overset{\mathbb{P}_0}{\underset{\varepsilon \to 0}{\longrightarrow}} \langle \nabla Z,h \rangle_H.$$
Thus, $Z$ belongs to $\mathcal{G}_2(2)$ if and only if $(|X_\varepsilon|^2)_{\varepsilon \in (0,1)}$ is uniformly integrable for every $h$ in $H$. In fact, in our example, for any $\varepsilon\in (0,1)$, $|X_\varepsilon|^2$ is even not integrable. Indeed, let $h\equiv 1$ in $H$ and fix $\varepsilon \in (0,1)$. Since $f$ is nondecreasing and nonnegative on $[\sqrt{2a},+\infty)$, we obtain
\begin{align*}
I&:= \mathbb{E}\left[|X_\varepsilon|^2 \right]= \int_\R \left|\frac{f(x+\varepsilon)-f(x)}{\varepsilon}\right|^2 \phi(x)dx\\
&\geq \int_{\sqrt{2a}}^{+\infty} \left|\frac{f(x+\varepsilon)-f(x)}{\varepsilon}\right|^2 \phi(x)dx\\
& \geq \int_{\sqrt{2a}}^{+\infty} \left|\frac{f(x+\varepsilon)-f(x+\varepsilon/2)}{\varepsilon}\right|^2 \phi(x)dx\\
& = \int_{\sqrt{2a}}^{+\infty} \left|\frac1\varepsilon\int_{\varepsilon/2}^\varepsilon f'(x+s)ds\right|^2 \phi(x)dx.
\end{align*} 
According to the mean value theorem, for any $x\in [\sqrt{2a},+\infty)$, there exists $s_\varepsilon^x\in (\varepsilon/2,\varepsilon)$ such that
$$Z_\varepsilon^x:= \varepsilon^{-1} \int_{\varepsilon/2}^\varepsilon f'(x+s)ds = f'(x+s_\varepsilon^x).$$
Thus, 
\begin{align*}
I\geq \int_{\sqrt{2a}}^{+\infty} \left| e^{\frac{(x+s_\varepsilon^x)^2}{4}}\psi_a(s_\varepsilon^x,x)\right|^2  e^{-\frac{x^2}{2}}dx= \int_{\sqrt{2a}}^{+\infty} \Gamma_{\varepsilon,x} \left|\psi_a(s_\varepsilon^x,x) \right
|^2  dx,\end{align*}
where $\psi_a(s,x):=-a (x+s)^{-a-1} +\frac12 (x+s)^{1-a}$ and $\Gamma_{\varepsilon,x}:=e^{\frac{(s_\varepsilon^x)^2}{2}+ s_\varepsilon^x x}.$

\vspace{0.5em}
\noindent Since $s_\varepsilon^x \in (\varepsilon/2,\varepsilon)$, we deduce that $I =+\infty$. Thus, for any $\varepsilon>0$, $|X_\varepsilon|^2$ is not integrable, so the family $(|X_\varepsilon|^2)_{\varepsilon \in (0,1)}$ is not uniformly integrable, and from the (converse part) of the dominated convergence Theorem, $Z\notin \mathcal{G}_2(2)$.
\end{proof}

\begin{remark} In the previous example we can recover the result of Theorem \ref{thm:characterizationD12} since $Z\in \mathbb{D}^{1,2}$, that is $(|X_\varepsilon|^q)_\varepsilon$ is indeed $($uniformly$)$ integrable when $q<2$, and so $Z\in \mathcal{G}_2(q)$. Assume that $1<q<q'<2$, one could show that for any $\varepsilon \in (0,1)$ and $x\in [\sqrt{2a},+\infty)$ there exists $s_\varepsilon^x \in (0,\varepsilon)$ such that
\begin{align*}
\sup\limits_{\varepsilon\in(0,1)} \mathbb{E}\left[|X_\varepsilon|^{q'} \right]=&\  \sup\limits_{\varepsilon\in (0,1)}\int_{\sqrt{2a}}^{+\infty} |f'(x+s_\varepsilon^x)|^{q'}\phi(x)dx+\sup\limits_{\varepsilon\in (0,1)}\int_{-\infty}^{\sqrt{2a}} |f'(x+s_\varepsilon^x)|^{q'} \phi(x)dx\\
=&\  \sup\limits_{\varepsilon\in (0,1)}\int_{\sqrt{2a}}^{+\infty} \left| e^{\frac{(x+s_\varepsilon^x)^2}{4}}\psi_a(s_\varepsilon^x,x)\right|^{q'}  \frac{e^{-\frac{x^2}{2}}}{(2\pi)^{\frac12-\frac{q'}{4}}}dx\\
&+ \sup\limits_{\varepsilon\in (0,1)}\int_{-\infty}^{\sqrt{2a}} |g'(x+s_\varepsilon^x)|^{q'}  \phi(x)dx\\
=&\  \sup\limits_{\varepsilon\in (0,1)}\int_{\sqrt{2a}}^{+\infty} \Gamma_{\varepsilon,x} \left|\psi_a(s_\varepsilon^x,x) \right
|^{q'} (2\pi)^{\frac{q'}{4}-\frac12} e^{\frac{x^2(\frac{q'}{2}-1)}{2}}dx\\
&+ \sup\limits_{\varepsilon\in (0,1)}\int_{-\infty}^{\sqrt{2a}} |g'(x+s_\varepsilon^x)|^{q'}  \phi(x)dx,
\end{align*}
where $\psi_a(s,x):=-a (x+s)^{-a-1} +\frac12 (x+s)^{1-a}$ and $\Gamma_{\varepsilon,x}:=e^{\frac{q' (s_\varepsilon^x)^2+2 q' s_\varepsilon^x x}{4}}.$

\vspace{0.5em}
\noindent Since the second term on the right-hand side above is uniformly bounded in $\varepsilon$ and since $q'\in (1,2)$
we deduce that $\sup\limits_{\varepsilon\in(0,1)} \mathbb{E}\left[|X_\varepsilon|^{q'} \right] <+\infty$. Thus, from the de la Vall\'ee-Poussin Criterion and the dominated convergence Theorem, $Z\in \mathcal{G}_2(q)$. 

\end{remark}
\noindent
We have thus constructed a random variable $Z$ in $\mathbb{D}^{1,2}$ which does not satisfy (SSGD$_2$(2)). Hence, according to Remark \ref{rk:forall}, for any $\varepsilon>0$, $Z\notin \mathbb{D}^{1,2+\varepsilon}$. Theorem \ref{th:main} is linked to another question, namely the right-continuity of the Malliavin-Sobolev spaces.
\begin{theorem}\label{thm:inclusion}
Let $p>1$. Define $ \mathbb{D}^{1,p+}:= \bigcup_{\varepsilon >0} \mathbb{D}^{1,p+\varepsilon} $. We have
\begin{equation}\label{inclu:d+dansd} \mathbb{D}^{1,p+}   \subsetneq   \mathcal{G}_p(p)   \subsetneq \mathbb{D}^{1,p}.\end{equation} \end{theorem}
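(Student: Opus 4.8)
\section*{Proof proposal for Theorem \ref{thm:inclusion}}

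The plan is to establish the two strict inclusions in \eqref{inclu:d+dansd} separately, using the example from Theorem \ref{th:main} as the witness for the strictness of the \emph{left} inclusion, and producing a second, opposite kind of example for the strictness of the \emph{right} inclusion.

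\textbf{The chain of inclusions (the easy direction).} First I would verify the non-strict inclusions $\mathbb{D}^{1,p+}\subseteq\mathcal{G}_p(p)\subseteq\mathbb{D}^{1,p}$. The rightmost inclusion is immediate: if $Z\in\mathcal{G}_p(p)$ then the difference quotients converge in $L^p$, hence \emph{a fortiori} in $L^q$ for every $q\in(1,p)$, so $Z$ satisfies $(SSGD_p)$ and by Theorem \ref{thm:characterizationD12} belongs to $\mathbb{D}^{1,p}=\mathcal{G}_p$. For the leftmost inclusion, suppose $Z\in\mathbb{D}^{1,p+\varepsilon}$ for some $\varepsilon>0$. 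The key is that the $L^{p+\varepsilon}$-integrability of the difference quotients gives, by de la Vall\'ee-Poussin, uniform integrability of the family $(|X_\delta|^p)_{\delta}$: indeed $\sup_\delta\E[|X_\delta|^{p+\varepsilon}]<+\infty$ bounds a superlinear moment of $|X_\delta|^p$. Combined with the convergence in probability $X_\delta\to\langle\nabla Z,h\rangle_H$ guaranteed by the (SGD) characterization, uniform integrability upgrades $L^q$-convergence (any $q<p$) to $L^p$-convergence, i.e. to $(SSGD_p(p))$. The uniform bound on $\sup_\delta\E[|X_\delta|^{p+\varepsilon}]$ is the one technical point here; it should follow from the Cameron-Martin formula (Proposition \ref{prop.cam}) applied to control $\E[|Z\circ\tau_{\delta h}|^{p+\varepsilon}]$ uniformly for small $\delta$, exactly as in the estimates of the Remark following Theorem \ref{th:main}.

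\textbf{Strictness of $\mathbb{D}^{1,p+}\subsetneq\mathcal{G}_p(p)$.} This is precisely where Theorem \ref{th:main} is almost enough, but pointed the wrong way: Theorem \ref{th:main} produces a $Z\in\mathbb{D}^{1,p}\setminus\mathcal{G}_p(p)$. What I need instead is a variable in $\mathcal{G}_p(p)$ that lies in \emph{no} $\mathbb{D}^{1,p+\varepsilon}$. I would construct it by the same template as \eqref{CE}, tuning the exponent $a$ so that the moment of $f'(W_1)$ of order $p$ is finite (placing $Z$ in $\mathbb{D}^{1,p}$) but the moment of order $p+\varepsilon$ diverges for every $\varepsilon>0$, while simultaneously arranging that the difference quotients themselves converge in $L^p$. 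The delicate balance is that the exponential factor $\Gamma_{\delta,x}=e^{(s^x_\delta)^2/2+s^x_\delta x}$, which blew up the $L^p$-norm in Theorem \ref{th:main}, must now be kept controlled: this suggests choosing $f$ \emph{without} the destabilising $e^{x^2/4}$ growth, e.g. a polynomially-decaying tail $f(x)\sim x^{-b}$ on $x\ge x_0$ with $b$ chosen so that $f'(W_1)\in L^p\setminus\bigcup_\varepsilon L^{p+\varepsilon}$. For such a purely polynomial example the shift $x\mapsto x+s$ with $s\in(0,\delta)$ perturbs $f'$ only mildly, so uniform integrability of $(|X_\delta|^p)$ holds and $Z\in\mathcal{G}_p(p)$, yet the heavy tail forbids any higher integrability and hence $Z\notin\mathbb{D}^{1,p+}$.

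\textbf{Strictness of $\mathcal{G}_p(p)\subsetneq\mathbb{D}^{1,p}$.} This is the one already delivered by Theorem \ref{th:main}: the random variable $Z$ of \eqref{CE} with $a>\tfrac32$ (for $p=2$) lies in $\mathbb{D}^{1,p}$ by Step 1 of that proof, but $|X_\delta|^p$ fails to be integrable for every $\delta$, so $Z\notin\mathcal{G}_p(p)$. Thus the witness for the right-hand strict inclusion is exactly the example already constructed.

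The main obstacle I anticipate is the strictness of the left inclusion, i.e. exhibiting an element of $\mathcal{G}_p(p)\setminus\mathbb{D}^{1,p+}$. One must \emph{simultaneously} secure two competing properties — $L^p$ (but no better) integrability of the Malliavin derivative, and $L^p$-convergence of the difference quotients — and the Cameron-Martin weight $\Gamma_{\delta,x}$ is precisely what couples them. The verification that the chosen tail exponent lands the derivative in $L^p\setminus\bigcup_\varepsilon L^{p+\varepsilon}$ while keeping $\sup_\delta\E[|X_\delta|^p]<\infty$ (so that uniform integrability, hence $L^p$-convergence, follows from de la Vall\'ee-Poussin) is the crux of the argument and will require the explicit tail estimates in the spirit of the Remark above.
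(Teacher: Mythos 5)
There is a genuine gap at the crux of your argument: the witness you propose for $\mathcal{G}_p(p)\setminus\mathbb{D}^{1,p+}$ cannot exist. You suggest taking $f$ with a polynomially decaying tail $f(x)\sim x^{-b}$ on $x\ge x_0$, with $b$ tuned so that $f'(W_1)\in L^p\setminus\bigcup_{\varepsilon>0}L^{p+\varepsilon}$. But against the Gaussian weight $\phi$, any polynomial behaviour at infinity yields finite moments of \emph{every} order: $\int_{x_0}^{+\infty}x^{-(b+1)r}\phi(x)\,dx<+\infty$ for all $r>0$, whatever $b$ is. So such a $Z$ (with tame behaviour elsewhere) would lie in $\mathbb{D}^{1,r}$ for every $r$, i.e. in $\mathbb{D}^{1,p+}$ --- the exact opposite of what you need. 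Nor can you rescue the idea by relocating the power singularity to a finite point: if $|f'(x)|\sim|x-x_0|^{-c}$ near $x_0$, then $f'(W_1)\in L^p$ iff $cp<1$, in which case $c(p+\varepsilon)<1$ for all small $\varepsilon$ as well, so a pure power can never be in $L^p$ while failing all $L^{p+\varepsilon}$. The critical behaviour must be logarithmic, and this is exactly what the paper does: it takes $F(x)=\sqrt{x}/\log(x)^3$ near $0$ (see \eqref{def_f_inclusion}), so that $|F'(x)|^2\sim 1/(4x\log(x)^6)$ is a convergent Bertrand integral at $0^+$ while $|F'|^{2+\varepsilon}$ diverges there for every $\varepsilon>0$, placing $Z$ in $\mathbb{D}^{1,2}\setminus\mathbb{D}^{1,2+}$.

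Even with the right $f$ in hand, your claim that the shift perturbs $f'$ ``only mildly, so uniform integrability of $(|X_\delta|^p)$ holds'' skips the hardest part of the paper's proof. Because the singularity sits at a finite point, the shifted variable $W_1+\delta h_1$ can cross it, and the boundary contributions (the terms $I_{-\infty}^{0}$ and $J_0$ in the paper, which carry a $1/\varepsilon^2$ factor) must be shown uniformly bounded; this succeeds only thanks to the specific choices \eqref{choiceetamu}--\eqref{choiceeta2}, the de la Vall\'ee-Poussin test function $\psi(x)=x|\log(x)|\mathbf{1}_{x>0}$, and separate case analyses for $h_1>0$ and $h_1<0$. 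Two smaller points: your verification of the inclusion $\mathbb{D}^{1,p+}\subset\mathcal{G}_p(p)$ via Cameron-Martin moment bounds is unnecessarily heavy (and, as stated, the uniform bound $\sup_\delta\mathbb{E}[|X_\delta|^{p+\varepsilon}]<+\infty$ does not follow directly from $Z\in\mathbb{D}^{1,p+\varepsilon}$ without integrability strictly above $p+\varepsilon$ to run the H\"older estimates); the paper gets this inclusion in one line from Remark \ref{rk:forall}: $Z\in\mathbb{D}^{1,p+\varepsilon}=\mathcal{G}_{p+\varepsilon}$ satisfies $(SSGD_{p+\varepsilon}(q'))$ for every $q'\in(1,p+\varepsilon)$, in particular for $q'=p$, whence $Z\in\mathcal{G}_p(p)$. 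Finally, your use of the example of Theorem \ref{th:main} for the strictness of the right-hand inclusion is correct and coincides with the paper's argument.
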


\begin{proof}
Theorem \ref{th:main} gives immediately the last inclusion $ \mathcal{G}_p(p)  \subsetneq \mathbb{D}^{1,p}$. Now we turn to the first inclusion. Let $Z$ in $\mathbb{D}^{1,p+}$, then there exists $\varepsilon >0$ such that $Z\in\mathbb{D}^{1,p+\varepsilon}$. From Theorem \ref{thm:characterizationD12} together with Remark \ref{rk:forall}, we obtain that $Z \in \mathbb{D}^{1,p+\varepsilon}=\mathcal{G}_{p+\varepsilon}\subset \mathcal{G}_{p}(p)$. Hence, 
$$ \mathbb{D}^{1,p+}   \subset  \mathcal{G}_p(p). $$
However, $    \mathcal{G}_p(p)  \not\subset  \mathbb{D}^{1,p+} $. For the sake of simplicity, assume that $p=2$. We aim at constructing a random variable $Z$ which is in $\mathcal{G}_2(2)$ but not in $\mathbb{D}^{1,2+}$. Fix some $\eta,\mu$ such that $0<\eta <\mu<e^{-1} $ and such that
\begin{align}\label{choiceetamu}
& \bullet \; x\longmapsto 1/(x|\log(x)|^{8}) \textrm{ is decreasing on } (0, \mu+\eta).\\
&\bullet \label{choiceeta1} \; x\longmapsto \left|\frac{x}{\log(x)^6} \log\left(\frac{x}{\log(x)^6} \right)\right| =  -\frac{x}{\log(x)^6} \log\left(\frac{x}{\log(x)^6}\right) \textrm{ is increasing on } (0,\eta].\\
&\bullet \label{choiceeta2}  \; x\longmapsto x(\log(x))^{-6} \textrm{ is increasing on } (0,\eta].
\end{align}
Notice that \eqref{choiceetamu} implies that the map $x\mapsto 1/(x|\log(x)|^{i})$ is decreasing on $(0, \mu+\eta)$ for $i\in \{5,6,7,8\}$.
Set $Z:=f(W_1)$ where $f:\mathbb{R}\longrightarrow \mathbb{R}$ is defined by
\begin{equation}\label{def_f_inclusion} f(x):= F(x) \mathbf{1}_{x\in (0,\mu]} + G(x)\mathbf{1}_{x>\mu} ,\end{equation}
where $G$ is a smooth function on $\mathbb R_+^*$ with compact support such that $f$ is continuously differentiable on $\mathbb{R}_+^*$, and where $F:\mathbb{R}^*_+\longrightarrow \real$ is continuous and continuously differentiable on $(0,\mu]$ and defined by 
$$ F(x)=\frac{\sqrt{x}}{\log(x)^3}, \quad F'(x)=\frac{1}{2\sqrt{x} \log(x)^{3} }-\frac{1}{\sqrt{x}\log(x)^{4}}.$$

Then, using Bertrand's integrals Theory, one can easily prove that $Z\in \mathbb{D}^{1,2}$ but $Z\notin \mathbb{D}^{1,2+}$. Nevertheless, one can show that $Z\in \mathcal{G}_2(2)$. Indeed, from \cite[Theorem 3]{sugita}, for any $h\in H$
$$X_\varepsilon:= \frac{Z\circ \tau_{\varepsilon h} - Z}{\varepsilon} \overset{\mathbb{P}_0}{\underset{\varepsilon \to 0}{\longrightarrow}} \langle \nabla Z,h \rangle_H.$$
Hence, $Z$ belongs to $\mathcal{G}_2(2)$ if and only if $(|X_\varepsilon|^2)_{\varepsilon\in (0, \eta/\mu)}$ is uniformly integrable for every $h$ in $H$. According to the de la Vall\'ee-Poussin Criterion (see e.g. the remark immediately after Theorem 1.1 in \cite{hu_rosalsky}) if there exists a non-negative measurable map $\psi$ from $\mathbb{R}^+$ into $\mathbb{R}^+$ such that for any $h_1 \neq 0$ $$\lim\limits_{x\to +\infty} \frac{\psi(x)}{x}=+\infty, \quad \sup_{\varepsilon\in (0,\eta/|h_1|)} \mathbb{E}\left[ \psi(|X_\varepsilon|^2)\right]<+\infty,$$
then $(|X_\varepsilon|^2)_{\varepsilon \in (0,\eta/ |h_1|)}$ is uniformly integrable for all $h\in H$. Take $\psi(x):= x\left|\log(x)\right|\mathbf{1}_{x>0}$ defined on $\mathbb{R}^+$.

\vspace{0.5em}
\noindent Notice that if $h_1=0$ then $X_\varepsilon=0$ so, $(|X_\varepsilon|^2)_{\varepsilon \in (0,\eta/ |h_1|)}$ is uniformly integrable. We now distinguish two cases. 

\paragraph*{Case $\mathbf{h_1>0}.$}
 We have
\begin{align*}
I&:=\sup\limits_{\varepsilon\in(0,\eta/ h_1)} \mathbb{E}\left[\psi(|X_\varepsilon|^2)  \right]\\
&= I_{\mu}^{+\infty}+ I_{0}^{\mu}+ I_{-\infty}^{0},
\end{align*}
where 
\begin{align*}
I_{\mu}^{+\infty}&:=\sup\limits_{\varepsilon\in(0,\eta/h_1)}\int_{\mu}^{+\infty} \left|\varepsilon^{-1} \int_0^{\varepsilon h_1}  f'(x+s)ds\right|^2 \left|\log\left(\left|\varepsilon^{-1} \int_0^{\varepsilon h_1} f'(x+s)ds\right|^2\right)\right|\phi(x)dx\\
I_{0}^{\mu}&:=\sup\limits_{\varepsilon\in(0,\eta/h_1)}\int_{0}^{\mu} \left|\varepsilon^{-1} \int_0^{\varepsilon h_1}  f'(x+s)ds\right|^2 \left|\log\left(\left|\varepsilon^{-1} \int_0^{\varepsilon h_1}  f'(x+s)ds\right|^2\right)\right|\phi(x)dx\\
I_{-\infty}^{0}&:= \sup\limits_{\varepsilon\in(0,\eta/h_1)}\int_{-\varepsilon h_1}^{0} \left|\frac{f(x+\varepsilon h_1)-f(x)}{\varepsilon}\right|^2 \left|\log\left(\left|\frac{f(x+\varepsilon h_1)-f(x)}{\varepsilon}\right|^2\right)\right|\phi(x)dx.
\end{align*}
From the definition \eqref{def_f_inclusion} of $f$, we obtain
\begin{align*}
I_{\mu}^{+\infty}&=\sup\limits_{\varepsilon\in(0,\eta/h_1)}\int_{\mu}^{+\infty} \left|\varepsilon^{-1} \int_0^{\varepsilon h_1}  G'(x+s)ds\right|^2 \left|\log\left(\left|\varepsilon^{-1} \int_0^{\varepsilon h_1}  G'(x+s)ds\right|^2\right)\right|\phi(x)dx\\
I_{0}^{\mu}&=\sup\limits_{\varepsilon\in(0,\eta/h_1)}\int_{0}^{\mu} \left|\varepsilon^{-1} \int_0^{\varepsilon h_1}  f'(x+t)dt\right|^2 \left|\log\left(\left|\varepsilon^{-1} \int_0^{\varepsilon h_1}  f'(x+t)dt\right|^2\right)\right|\phi(x)dx\\
I_{-\infty}^{0}&= \sup\limits_{\varepsilon\in(0,\eta/h_1)}\int_{-\varepsilon h_1}^{0} \left|\frac{f(x+\varepsilon h_1)}{\varepsilon}\right|^2 \left|\log\left(\left|\frac{f(x+\varepsilon h_1)}{\varepsilon}\right|^2\right)\right|\phi(x)dx.
\end{align*}
According to the mean value theorem, for any $\varepsilon \in (0,\eta/h_1)$ and for any $x\in \mathbb R^+$, there exist $s_\varepsilon^x\in (0,\varepsilon h_1)$ and $t _\varepsilon^x\in (0,\varepsilon h_1)$ such that
$$\varepsilon^{-1} \int_0^{\varepsilon h_1}  G'(x+s)ds=G'(x+s_\varepsilon^x) \text{ and } \varepsilon^{-1} \int_0^{\varepsilon h_1} f'(x+t)dt= f'(x+t_\varepsilon^x).$$
Hence, from the definition of $G$ (we remind the reader that $G$ has compact support), we deduce that $I_{\mu}^{+\infty}<+\infty$. Now, we denote by $i(\varepsilon,\mu)$ the subset of $(0,\mu)$ defined by $i(\varepsilon,\mu):=\{x\in (0,\mu)| x+t_\varepsilon^x \leq \mu\}$. There exists a constant $C>0$ which may vary from line to line such that
\begin{align*}
I_{0}^{\mu}=&\ \sup\limits_{\varepsilon\in(0,\eta/h_1)}\int_{0}^{\mu} \left|f'(x+t_\varepsilon^x)\right|^2 \left|\log\left(\left|f'(x+t_\varepsilon^x)\right|^2\right)\right|\phi(x)dx\\
\leq&\ C\left(\sup\limits_{\varepsilon\in(0,\eta/h_1)}\int_{i(\varepsilon,\mu)} \frac{1}{(x+t_\varepsilon^x)| \log(x+t_\varepsilon^x)|^{6} } \left|\log\left(\left|\frac{\log(x+t_\varepsilon^x)-2}{2\sqrt{x+t_\varepsilon^x} \log(x+t_\varepsilon^x)^{4} }\right|^2\right)\right|\phi(x)dx\right.\\
&+ \sup\limits_{\varepsilon\in(0,\eta/h_1)}\int_{i(\varepsilon,\mu)} \frac{1}{(x+t_\varepsilon^x)| \log(x+t_\varepsilon^x)|^{8} } \left|\log\left(\left|\frac{\log(x+t_\varepsilon^x)-2}{2\sqrt{x+t_\varepsilon^x} \log(x+t_\varepsilon^x)^{4} }\right|^2\right)\right|\phi(x)dx\\
&+\left. \sup\limits_{\varepsilon\in(0,\eta/h_1)}\int_{(0,\mu) \setminus i(\varepsilon,\mu)}  \left|G'(x+t_\varepsilon^x)\right|^2 \left|\log\left(\left|G'(x+t_\varepsilon^x)\right|^2\right)\right|\phi(x)dx\right)\\
=&\ C\left( \sup\limits_{\varepsilon\in(0,\eta/h_1)}\int_{i(\varepsilon,\mu)} \frac{1}{(x+t_\varepsilon^x)| \log(x+t_\varepsilon^x)|^{6} } \log\left(\frac{|\log(x+t_\varepsilon^x)-2|^2}{4(x+t_\varepsilon^x) \log(x+t_\varepsilon^x)^{8} }\right)\phi(x)dx\right.\\
&+ \left.\sup\limits_{\varepsilon\in(0,\eta/h_1)}\int_{i(\varepsilon,\mu)} \frac{1}{(x+t_\varepsilon^x)| \log(x+t_\varepsilon^x)|^{8} } \log\left(\frac{|\log(x+t_\varepsilon^x)-2|^2}{4(x+t_\varepsilon^x) \log(x+t_\varepsilon^x)^{8} }\right)\phi(x)dx+1\right).
\end{align*}
Since $\mu$ is smaller than $e^{-1}$, we can ensure that for any $\varepsilon\in (0,\eta/h_1)$ and $x\in i(\varepsilon,\mu)$, we have
 $$\left|\log(\left|\log(x+t_\varepsilon^x)\right|)\right|\leq |\log(x+t_\varepsilon^x)|.$$ Hence, there exists $C>0$ such that $$ I_1\leq C \sup\limits_{\varepsilon\in(0,\eta/h_1)}\sum_{i=5}^{8} \int_{i(\varepsilon,\mu)} \frac{1}{(x+t_\varepsilon^x)| \log(x+t_\varepsilon^x)|^{i} } dx.$$
It is then clear, from the properties of Bertrand's integrals together with the choice of $\eta,\mu$ according to \eqref{choiceetamu}, that $I_0^{\mu}<+\infty.$
Besides, 
\begin{align*}
I_{-\infty}^{0}&= \sup\limits_{\varepsilon\in(0,\eta/h_1)}\int_{-\varepsilon h_1}^{0} \left|\frac{f(x+\varepsilon h_1)}{\varepsilon}\right|^2 \left|\log\left(\left|\frac{f(x+\varepsilon h_1)}{\varepsilon}\right|^2\right)\right|\phi(x)dx\\
&=\sup\limits_{\varepsilon\in(0,\eta / h_1)}\frac{1}{\varepsilon^2}\int_{0}^{\varepsilon h_1} \frac{x}{\log(x)^6} \left|\log\left(\frac{x}{\varepsilon^2 \log(x)^6}\right)\right|\phi(x-\varepsilon h_1)dx\\
&=I_1+I_2,
\end{align*}
where 
\begin{align*}
I_1&=\sup\limits_{\varepsilon\in(0,\eta/h_1)}\frac{1}{\varepsilon^2} \int_{0}^{\varepsilon h_1} \frac{x}{\log(x)^6} \left|\log(\varepsilon^{-2})\right|\phi(x-\varepsilon h_1)dx\\
I_2&=\sup\limits_{\varepsilon\in(0,\eta/ h_1)}\frac{1}{\varepsilon^2}\int_{0}^{\varepsilon h_1} \left| \frac{x}{\log(x)^6} \log\left(\frac{x}{\log(x)^6} \right)\right|\phi(x-\varepsilon h_1)dx.
\end{align*}
We first show that $I_1$ is finite. From \eqref{choiceeta2} and since $\eta < e^{-1}<1$ we obtain
\begin{align*}
I_1&\leq \sup\limits_{\varepsilon\in(0,\eta/h_1)}  |h_1|^2\frac{|\log(\varepsilon^{-2})|}{\log(\varepsilon h_1)^6}\phi(0)\leq |h_1|^2 \phi(0)\left( -\frac{2}{\log(\eta)^5} + \frac{|\log(h_1)|}{\log(\eta)^6} \right)<+\infty.
\end{align*}
We now turn to $I_2$ and we obtain from \eqref{choiceeta1} together with  \eqref{choiceetamu} that
\begin{align*}
I_2\leq \sup\limits_{\varepsilon\in(0,\eta/h_1)} \frac{|h_1|^2}{\log(\varepsilon h_1)^6} \left|\log\left(\frac{\varepsilon h_1}{\log(\varepsilon h_1)^6}\right) \right|\phi(0)\leq -\frac{7 |h_1|^2}{\log(\eta)^5}\phi(0) <+\infty,
\end{align*}
which implies in turn $ I_{-\infty}^{0}<+\infty$.

\paragraph*{Case $\mathbf{h_1}<0$.} Similarly to the previous case, we have
\begin{align*}
J&:=\sup\limits_{\varepsilon\in(0,\eta/|h_1|)} \mathbb{E}\left[\psi(|X_\varepsilon|^2)  \right]\\
&= J_{+\infty}+ J_{\mu}+ J_{0},
\end{align*}
where 
\begin{align*}
J_{+\infty}&:=\sup\limits_{\varepsilon\in(0,\eta /|h_1|)}\int_{\mu+\varepsilon |h_1|}^{+\infty} \left|\varepsilon^{-1} \int_0^{\varepsilon |h_1|} f'(x-s)ds\right|^2 \left|\log\left(\left|\varepsilon^{-1} \int_0^{\varepsilon |h_1|} f'(x-s)ds\right|^2\right)\right|\phi(x)dx\\
J_{\mu}&:=\sup\limits_{\varepsilon\in(0,\eta/ |h_1|)}\int_{\varepsilon |h_1|}^{\mu+\varepsilon |h_1|} \left|\varepsilon^{-1} \int_0^{\varepsilon |h_1|} f'(x-s)ds\right|^2 \left|\log\left(\left|\varepsilon^{-1} \int_0^{\varepsilon |h_1|} f'(x-s)ds\right|^2\right)\right|\phi(x)dx\\
J_{0}&:= \sup\limits_{\varepsilon\in(0,\eta / {\varepsilon |h_1|})}\int_{0}^{\varepsilon |h_1|} \left|\frac{f(x-{\varepsilon |h_1|})-f(x)}{\varepsilon}\right|^2 \left|\log\left(\left|\frac{f(x-{\varepsilon |h_1|})-f(x)}{\varepsilon}\right|^2\right)\right|\phi(x)dx.
\end{align*}
From the definition \eqref{def_f_inclusion} of $f$, we obtain
\begin{align*}
J_{+\infty}&=\sup\limits_{\varepsilon\in(0,\eta/ |h_1|)}\int_{\mu+\varepsilon |h_1|}^{+\infty} \left|\varepsilon^{-1} \int_0^{\varepsilon |h_1|} G'(x-s)ds\right|^2 \left|\log\left(\left|\varepsilon^{-1} \int_0^{\varepsilon |h_1|} G'(x-s)ds\right|^2\right)\right|\phi(x)dx\\
J_{\mu}&=\sup\limits_{\varepsilon\in(0,\eta/|h_1|)}\int_{\varepsilon |h_1|}^{\mu+\varepsilon |h_1|} \left|\varepsilon^{-1} \int_0^{\varepsilon |h_1|} f'(x-t)dt\right|^2 \left|\log\left(\left|\varepsilon^{-1} \int_0^{\varepsilon |h_1|} f'(x-t)dt\right|^2\right)\right|\phi(x)dx\\
J_{0}&= \sup\limits_{\varepsilon\in(0,\eta/|h_1|)}\int_{0}^{\varepsilon |h_1|}\left|\frac{f(x)}{\varepsilon}\right|^2 \left|\log\left(\left|\frac{f(x)}{\varepsilon}\right|^2\right)\right|\phi(x)dx.
\end{align*}
From the mean value Theorem, one shows that $J_{+\infty}<+\infty$. Besides, by making the substitution $y=x-\varepsilon |h_1|$ in $J_{0}$ we deduce that $J_{0}\leq e^{\frac{|\eta|^2}{2}} I_{-\infty}^0<+\infty$. Now, turn to $J_{\mu}$. According to the mean value theorem, for any $\varepsilon \in (0,\eta/|h_1|)$ and for any $x\in \mathbb (\varepsilon |h_1|, \mu + \varepsilon |h_1|]$, there exists $t _\varepsilon^x\in (0,\varepsilon |h_1|)$ such that
$$\varepsilon^{-1} \int_0^{\varepsilon |h_1|} f'(x-t)dt= f'(x-t_\varepsilon^x).$$ So, following the same lines as in the proof of $I_{0}^{\mu}<+\infty$, we obtain $J_{\mu}<+\infty$. Indeed,
\begin{align*}
J_{\mu}=&\ \sup\limits_{\varepsilon\in(0,\eta/|h_1|)}\int_{\varepsilon |h_1|}^{\mu+\varepsilon |h_1|} \left| f'(x-t_\varepsilon^x)\right|^2 \left|\log\left(\left| f'(x-t_\varepsilon^x)\right|^2\right)\right|\phi(x)dx\\
\leq&\ \sup\limits_{\varepsilon\in(0,\eta/|h_1|)}\int_{j(\varepsilon,\mu)} \left| f'(x-t_\varepsilon^x)\right|^2 \left|\log\left(\left| f'(x-t_\varepsilon^x)\right|^2\right)\right|\phi(x)dx \\
&+ \sup\limits_{\varepsilon\in(0,\eta/|h_1|)}\int_{(\varepsilon|h_1|,\mu+\varepsilon|h_1|)\setminus j(\varepsilon,\mu)} \left| f'(x-t_\varepsilon^x)\right|^2 \left|\log\left(\left| f'(x-t_\varepsilon^x)\right|^2\right)\right|\phi(x)dx,
\end{align*} 
with $ j(\varepsilon,\mu):=\{ x\in (\varepsilon |h_1|, \mu+\varepsilon |h_1|), \; x-t_\varepsilon^x \leq \mu\}.$ Hence,
\begin{align*}
J_\mu\leq&\ C\left( \sup\limits_{\varepsilon\in(0,\eta/|h_1|)}\int_{j(\varepsilon,\mu)} \frac{1}{(x-t_\varepsilon^x)| \log(x-t_\varepsilon^x)|^{6} } \log\left(\frac{|\log(x-t_\varepsilon^x)-2|^2}{4(x-t_\varepsilon^x) \log(x-t_\varepsilon^x)^{8} }\right)\phi(x)dx\right.\\
&+ \left.\sup\limits_{\varepsilon\in(0,\eta/|h_1|)}\int_{j(\varepsilon,\mu)} \frac{1}{(x-t_\varepsilon^x)| \log(x-t_\varepsilon^x)|^{8} } \log\left(\frac{|\log(x-t_\varepsilon^x)-2|^2}{4(x-t_\varepsilon^x) \log(x-t_\varepsilon^x)^{8} }\right)\phi(x)dx+1\right)\\
\leq&\ C \left(1+ \sup\limits_{\varepsilon\in(0,\eta/h_1)}\sum_{i=5}^{8} \int_{j(\varepsilon,\mu)} \frac{1}{(x-t_\varepsilon^x)| \log(x-t_\varepsilon^x)|^{i} } dx\right).
\end{align*}
In addition, \eqref{choiceetamu} implies that
\begin{align*}
J_\mu&\leq C \left(1+ \sup\limits_{\varepsilon\in(0,\eta/h_1)}\sum_{i=5}^{8} \int_{j(\varepsilon,\mu)} \frac{1}{(x-\varepsilon |h_1|)| \log(x-\varepsilon |h_1|)|^{i} }dx \right)\\
&\leq C \left(1+ \sup\limits_{\varepsilon\in(0,\eta/h_1)}\sum_{i=5}^{8} \int_{\varepsilon |h_1|}^{\mu+\varepsilon |h_1|} \frac{1}{(x-\varepsilon |h_1|)| \log(x-\varepsilon |h_1|)|^{i} }dx \right)\\
&=C \left(1+ \sum_{i=5}^{8} \int_{0}^{\mu} \frac{1}{y| \log(y)|^{i} }dy \right)<+\infty.
\end{align*}

\noindent This concludes the proof. 

\end{proof}

\begin{remark}
Theorem \ref{thm:inclusion} describes the fine structure of $\mathbb{D}^{1,p}$. However, giving an explicit characterization of $\mathbb{D}^{1,p+}$ seems to be a far more complicated result. Indeed, even for the $L^p$-spaces, it is quite difficult to characterize $\bigcup_{\varepsilon >0}L^{p+\varepsilon}.$ This problem was studied recently in \cite{KMM}, using Lorenz spaces Theory.
\end{remark}

\section*{Acknowledgments}

Thibaut Mastrolia is grateful to R\'egion \^Ile-de-France for financial support and acknowledges INSA de Toulouse for its warm hospitality.

\end{document}